\newcommand\abs[1]{\lvert #1\rvert}
\newtheorem{THM}{Theorem}
\newtheorem{COR}[THM]{Corollary}
\theoremstyle{remark}
\newtheorem*{REM}{Remark}
\begin{document}
\title{Vertex-minors and the~Erd\H{o}s-Hajnal~conjecture}
\author{Maria Chudnovsky}
\address[Chudnovsky]{Department of Mathematics, Princeton University, Princeton, USA}
\email{mchudnov@math.princeton.edu}
\author{Sang-il Oum}
\address[Oum]{Department of Mathematical Sciences, KAIST, Daejeon, South Korea}
\email{sangil@kaist.edu}
\thanks{
Chudnovsky was supported by NSF grant DMS-1550991. This material is based upon work supported in part by the U. S. Army Research Laboratory and the U. S. Army Research Office under grant number W911NF-16-1-0404.
Oum was supported by the National Research Foundation of Korea (NRF) grant funded by the Korea government (MSIT) (No. NRF-2017R1A2B4005020).}
\date{\today}

\begin{abstract}
  We prove that for every graph $H$,
  there exists $\varepsilon>0$ such that
  every $n$-vertex graph with no vertex-minors isomorphic to $H$
  has a pair of disjoint sets $A$, $B$ of vertices such that 
  $\abs{A}, \abs{B}\ge \varepsilon n$ and 
  $A$ is complete or anticomplete to $B$.
  We deduce this from
  recent work of Chudnovsky, Scott, Seymour, and Spirkl (2018).
  This proves the analog of the Erd\H{o}s-Hajnal conjecture for vertex-minors.
\end{abstract}
\keywords{vertex-minor, Erd\H{o}s-Hajnal conjecture}

\maketitle

For a graph $G$, let $\alpha(G)$ be the maximum size of an independent set, that
is a set of pairwise non-adjacent  vertices.
Let $\omega(G)$ be the maximum size of a clique, that is a set of pairwise
adjacent vertices.
In 1989, Erd\H{o}s and Hajnal~\cite{EH1989} conjectured that for every graph $H$,
there exists $\varepsilon>0$ such that
if a graph $G$ has no induced subgraph isomorphic to $H$,
then \[\max(\omega(G),\alpha(G))\ge\abs{V(G)}^\varepsilon.\]
A few years ago, Chudnovsky proposed a weaker question; is it true if we replace ``induced subgraphs'' by ``vertex-minors''?

If a class $\mathcal G$ of graphs closed under taking induced subgraphs has
some $\varepsilon>0$ such that every graph in $\mathcal G$ has
an independent set or a clique of size more than  $\abs{V(G)}^\varepsilon$, then
we say that $\mathcal G$ has the \emph{Erd\H{o}s-Hajnal property}.

We prove that for every graph $H$, the class of graphs with no vertex-minor isomorphic to $H$ has the Erd\H{o}s-Hajnal property. 
In addition, we prove a stronger property that is defined as follows.
A set $A$ of vertices is \emph{complete} to a set $B$ of vertices
if every vertex in $A$ is adjacent to every vertex of $B$.
A set $A$ of vertices is \emph{anticomplete} to a set $B$ of vertices
if every vertex in $A$ is  non-adjacent to every vertex of $B$.
If a class $\mathcal G$ of graphs closed under taking induced subgraphs has
some $\varepsilon>0$ such  that
every graph in $\mathcal G$ has  a complete or anticomplete pair of disjoint sets $A$, $B$ with $\abs{A}, \abs{B}\ge \varepsilon\abs{V(G)}$,
then we say that $\mathcal G$ has the \emph{strong Erd\H{o}s-Hajnal property}.
It is well known that
the strong Erd\H{o}s-Hajnal property implies the Erd\H{o}s-Hajnal property, see~\cite{APPRS2005,FP2008a}.
We prove that for every graph $H$, the class of graphs with no vertex-minor isomorphic to $H$
has the strong Erd\H{o}s-Hajnal property.

Before presenting our theorem, we state the definition of vertex-minors \cite{Oum2004}.
For a graph $G$ and its vertex $v$,
the \emph{local complementation} at $v$
results in the new graph, denoted by $G*v$, such that
$V(G*v)=V(G)$
and
two distinct vertices $x$, $y$ are adjacent in $G*v$
if either
\begin{enumerate}[(i)]
\item both $x$ and $y$ are neighbors of $v$ in $G$  and $x$, $y$ are non-adjacent in $G$,
  or
\item at least one of $x$ or $y$ is non-adjacent to $v$ in $G$
  and $x$, $y$ are adjacent in $G$.
\end{enumerate}
A graph $H$ is a \emph{vertex-minor} of a graph $G$ if
$H$ is an induced subgraph of $G*v_1*v_2*\cdots*v_k$
for some sequences of vertices $v_1,v_2,\ldots,v_k$ (not necessarily distinct) with $k\ge 0$.

Now we state our main theorem.
\begin{THM}\label{thm:main}
  For every graph $H$, there exists $\varepsilon>0$ such that
  every $n$-vertex graph $G$
  has a vertex-minor isomorphic to $H$ 
  or
  has a pair of disjoint sets $A$, $B$ of of vertices such that
  $A$ is either complete or anticomplete to $B$
  and $\abs{A},\abs{B}\ge \varepsilon n$.
\end{THM}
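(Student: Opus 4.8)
The plan is to deduce the statement from the theorem of Chudnovsky, Scott, Seymour, and Spirkl, which, in the form I shall use, asserts that for every graph $F$ there is $\delta>0$ such that if neither $G$ nor its complement $\overline G$ contains an induced subgraph isomorphic to a subdivision of $F$ (in which every edge is subdivided at least once), then $G$ has disjoint sets $A,B$ with $\abs A,\abs B\ge \delta\abs{V(G)}$ and $A$ complete or anticomplete to $B$. Given $H$, I would set $t=\abs{V(H)}+1$ and take $F=K_t$. It then suffices to establish two reductions: that every subdivision of $K_t$, and also the complement of every such subdivision, contains $H$ as a vertex-minor. Indeed, if $G$ has no vertex-minor isomorphic to $H$, these two facts force $G$ to contain no induced subdivision of $K_t$ and, since an induced complement-of-subdivision in $G$ is exactly an induced subdivision in $\overline G$, they force $\overline G$ to contain no induced subdivision of $K_t$ either; the cited theorem then yields the required complete or anticomplete pair. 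Conversely, if either reduction is witnessed inside $G$, then $G$ has a vertex-minor isomorphic to $H$, and the theorem holds trivially.

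For the first reduction I would use that a single local complementation shortens a subdivided edge. If $p-y_1-\cdots-y_\ell-q$ is an induced path whose internal vertices have degree $2$, then $*y_1$ toggles only the pair $\{p,y_2\}$ (the two neighbors of $y_1$), making $p$ adjacent to $y_2$ and leaving every other adjacency unchanged; deleting $y_1$ shortens the path by one. Iterating reduces the path to the single edge $pq$ and introduces no other adjacency among the branch vertices. Applying this independently to each subdivided edge of a subdivision $S$ of $K_t$ recovers $K_t$ on the branch vertices, since operations on distinct edge-paths touch only their own internal vertices and endpoints. To reach $H$ rather than $K_t$, I would first delete, for each non-edge of $H$, the internal vertices of the corresponding edge-path of $S$, together with the unused branch vertices; this leaves an induced subdivision of $H$, which the path-shortening turns into $H$.

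The second reduction is the crux. Let $S$ be a subdivision of $K_t$ with every edge subdivided at least once, and pick an internal vertex $x$ of some edge-path; its only non-neighbors in $\overline S$ are its two path-neighbors $a,b$. Local complementation at $x$ toggles exactly the adjacencies within $N_{\overline S}(x)=V(S)\setminus\{x,a,b\}$, and since $\overline S$ restricted to that set is the complement of $S$ restricted to it, toggling restores $S$ there. Thus $\overline S*x$ agrees with $S$ on $V(S)\setminus\{x,a,b\}$, so deleting $x,a,b$ yields $S-\{x,a,b\}$. Removing these three consecutive vertices breaks the one edge-path through $x$; discarding the remaining internal vertices of that broken path together with one of its two endpoints then leaves a clean induced subdivision of $K_{t-1}$. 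As $t-1\ge\abs{V(H)}$, the first reduction shows it contains $H$ as a vertex-minor, and hence so does $\overline S$.

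I expect the genuinely delicate step to be this second reduction: the observation that a degree-two vertex of $S$ is almost universal in $\overline S$, so that a single local complementation there inverts the complement on all but three vertices and returns a slightly smaller subdivision. I would verify carefully that $*x$ and the ensuing deletions compose correctly with the later path-shortenings (they do, since vertex-minor operations may be composed freely) and that choosing which edge to destroy leaves a genuinely induced subdivision of $K_{t-1}$. The path-shortening itself is routine. The only external uncertainty is the exact quantifier form of the Chudnovsky--Scott--Seymour--Spirkl theorem, in particular whether it is stated for all subdivisions or only for sufficiently subdivided copies; the reductions above are compatible with either, since they apply to every subdivision in which each edge is subdivided at least once.
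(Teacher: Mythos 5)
Your route is genuinely different from the paper's. The paper never needs to understand complements of subdivisions: it uses the fact that $G*v$ restricted to $N(v)$ is the complement of $G$ restricted to $N(v)$, splits into cases according to the maximum degree, applies the sparse Chudnovsky--Scott--Seymour--Spirkl theorem when every degree is at most $2\delta n$, and otherwise applies the general theorem to the neighbourhood of a high-degree vertex $v$, where the ``complement of a subdivision'' outcome is converted into a ``subdivision'' outcome by a single local complementation at $v$. You instead apply the general theorem once, to $G$ itself with target $K_t$, and carry the burden of showing that the complement of a subdivision of $K_t$ contains $H$ as a vertex-minor. Your key observation --- that an internal vertex $x$ of an edge-path is adjacent in $\overline S$ to everything except its two path-neighbours $a,b$, so that $\overline S * x$ restricted to $V(S)\setminus\{x,a,b\}$ equals $S$ restricted to that set --- is correct and is a nice lemma in its own right; the path-shortening in your first reduction is the standard smoothing argument and is also fine.

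There are, however, two concrete errors to repair. First, your fallback claim that the reductions are ``compatible with either'' form of the cited theorem is not true. As the theorem is stated, it promises only \emph{a} subdivision, which may be trivial; the complement of the trivial subdivision of $K_t$ is the edgeless graph $\overline{K_t}$, all of whose vertex-minors are edgeless, so the second reduction collapses, and the first reduction likewise cannot delete the ``internal vertices'' of an unsubdivided path to realise a non-edge of $H$. The fix is easy but must be stated: apply the theorem not to $K_t$ but to the $1$-subdivision of $K_t$, so that every subdivision delivered is proper. Second, $t=\abs{V(H)}+1$ is off by one: if the edge-path through your chosen $x$ has a single internal vertex, then $a$ and $b$ are both branch vertices, deleting $x,a,b$ costs two branch vertices, and you are left with only a subdivision of $K_{t-2}$ (and only after also discarding the dangling internal vertices of all paths formerly incident to the deleted branch vertices, a cleanup step you should make explicit); take $t=\abs{V(H)}+2$. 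With these repairs your argument is a correct, self-contained alternative that needs only the first of the two quoted theorems, at the price of the extra lemma on complements of subdivisions.
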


As the strong Erd\H{o}s-Hajnal property implies the Erd\H{o}s-Hajanl property, 
we deduce the following.
\begin{COR}
  For every graph $H$, there exists $\varepsilon>0$ such that
  if a graph $G$ has no vertex-minor isomorphic to $H$, then
  \[ \max(\alpha(G),\omega(G))\ge\abs{V(G)}^\varepsilon.\]
\end{COR}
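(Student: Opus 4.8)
The plan is to reduce the vertex-minor statement to a statement about forbidden \emph{induced} subgraphs and then invoke the pure-pair theorem of Chudnovsky, Scott, Seymour, and Spirkl. The bridge is an elementary monotonicity: vertex-minors are preserved both under local complementation and under the passage to induced subgraphs, so if $W$ is any graph having $H$ as a vertex-minor, then every graph that contains $W$ as an \emph{induced} subgraph also has $H$ as a vertex-minor. Contrapositively, a graph $G$ with no vertex-minor isomorphic to $H$ contains no induced subgraph isomorphic to $W$, and by choosing the (finite) family of gadgets to be closed under complementation we may forbid the complements $\overline W$ as well. Thus the whole difficulty is funnelled into two tasks: (1) produce a gadget $W$ (or a finite family) having $H$ as a vertex-minor, and (2) arrange that the class of graphs excluding these gadgets as induced subgraphs is exactly a class to which the cited theorem applies, yielding disjoint $A,B$ with $\abs{A},\abs{B}\ge \varepsilon n$ and $A$ complete or anticomplete to $B$.

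To see why this is the right shape, and to dispatch the tame case directly, I would first record the following observation. Write $\rho(X)$ for the $\mathrm{GF}(2)$-rank of the bipartite adjacency matrix of a partition $(X,V\setminus X)$. If $G$ has a balanced partition $(X,Y)$ with $\abs{X},\abs{Y}\ge \beta n$ and $\rho(X)\le r$, then the rows of that matrix take at most $2^r$ distinct values, partitioning $X$, and its columns take at most $2^r$ distinct values, partitioning $Y$; choosing the largest row-class $A\subseteq X$ and the largest column-class $B\subseteq Y$ gives $\abs{A},\abs{B}\ge \beta n/2^r$, and the submatrix on $A\times B$ is forced to be constant, so $A$ is complete or anticomplete to $B$. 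A standard centroid-edge argument applied to a rank-decomposition then supplies such a balanced cut whenever $G$ has bounded rank-width, settling that case. Since local complementation and deletion never increase rank-width, any gadget $W$ with $H\le_{\mathrm{vm}}W$ must itself have rank-width at least that of $H$; this is precisely why $W$ cannot be taken to be a forest for complicated $H$, and why a genuine external input is unavoidable in the complex case.

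The key construction is therefore to realize an arbitrary $H$ as a vertex-minor of a gadget $W$ whose shape is compatible with the cited theorem. I would look for a universal gadget built from $H$ — for instance a bipartite ``fundamental graph'' associated to $I+A(H)$ in the bipartite case, augmented by a short prescribed sequence of local complementations to reach non-bipartite $H$ — and verify the vertex-minor relation by exhibiting this sequence explicitly. Once $W$ (and the corresponding family closed under complementation) is in hand, an $H$-vertex-minor-free graph is automatically free of $W$ and $\overline W$ as induced subgraphs, and I would read off the linear-sized complete-or-anticomplete pair from the Chudnovsky--Scott--Seymour--Spirkl theorem.

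The main obstacle is step (2): matching the gadget to the external theorem. The vertex-minor relation forces $W$ to carry at least as much structural complexity as $H$, whereas a linear pure pair can be guaranteed only for classes that exclude genuinely rich induced configurations. Reconciling these demands — choosing $W$ rich enough to have $H$ as a vertex-minor yet ``excludable'' in the precise sense the cited theorem requires — is where the real work lies, and is exactly the point at which the recent Chudnovsky--Scott--Seymour--Spirkl result is indispensable. By comparison, verifying the explicit local-complementation sequence that realizes $H$ inside $W$ should be routine.
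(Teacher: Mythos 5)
Your overall strategy --- reduce exclusion of $H$ as a vertex-minor to exclusion of suitable gadgets as \emph{induced} subgraphs, then invoke Chudnovsky--Scott--Seymour--Spirkl --- is the right one, and your bridge (an induced subdivision of $H$ yields $H$ as a vertex-minor, by locally complementing the degree-$2$ subdivision vertices) is exactly the one the paper uses. But there is a genuine gap at the point you yourself flag as ``where the real work lies'': the outcome family of Theorem~\ref{thm:gen} for a target graph consists of the subdivisions of that graph \emph{together with their complements}, and while a subdivision of $H$ has $H$ as a vertex-minor, the complement of a subdivision of $H$ need not --- and you give no construction of a gadget family that is simultaneously closed under complementation and such that every member has $H$ as a vertex-minor. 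Your fallback suggestion of a bipartite fundamental-graph gadget for $I+A(H)$ does not address this either: the obstruction is not realizing $H$ inside one gadget but controlling the complemented half of the outcome. As written, the proposal does not close.

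The paper resolves this with a degree dichotomy rather than a cleverer gadget. If every vertex of $G$ has degree at most $2\delta n$, then $\abs{E(G)}\le\delta n^2$ and one applies the sparse form of the theorem (Theorem~\ref{thm:sparse}), whose only non-pair outcome is an induced subdivision of $H$ --- no complements appear, so the bridge applies directly and yields an anticomplete pair of size $\delta n$. Otherwise some vertex $v$ has degree more than $2\delta n$; letting $G'$ be the subgraph induced on the neighbors of $v$, one notes that the complement of $G'$ is an induced subgraph of $G*v$, so \emph{both} $G'$ and its complement exclude induced subdivisions of $H$ (such a subdivision in either would make $H$ a vertex-minor of $G$ or of $G*v$, hence of $G$). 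Now Theorem~\ref{thm:gen} applied to $G'$ has both non-pair outcomes blocked and returns a pure pair of size at least $c\cdot 2\delta n$. This local complementation at a high-degree vertex is the missing idea in your argument. Two smaller points: your bounded-rank-width digression is correct but only reproduces the paper's closing remark and does not feed into the general case; and to pass from the linear pure pair to $\max(\alpha(G),\omega(G))\ge\abs{V(G)}^{\varepsilon}$ you still need the standard implication from the strong Erd\H{o}s--Hajnal property to the Erd\H{o}s--Hajnal property, which the paper cites to \cite{APPRS2005,FP2008a}.
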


Now let us present the proof. 
Our proof is based on the following theorems of Chudnovsky, Scott, Seymour, and Spirkl~\cite{CSSS2018}.
\begin{THM}[Chudnovsky et al.~\cite{CSSS2018}]\label{thm:gen}
  For every graph $H$, there exists $c>0$ such that
  every graph $G$
  has an induced subgraph isomorphic
  to a subdivision of $H$ or the complement of a subdivision of $H$
  or
  has a pair of disjoint sets $A$, $B$ of of vertices such that
  $A$ is either complete or anticomplete to $B$
  and $\abs{A},\abs{B}\ge c\abs{V(G)}$.
\end{THM}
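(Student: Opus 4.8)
The plan is to reduce the statement to the single case $H=K_t$ and then, under the assumption that no large pure pair exists, to construct one of the two forbidden induced subgraphs by hand. Fix $t=\abs{V(H)}$. I first observe that it suffices to find an induced \emph{proper} subdivision of $K_t$ (one in which every edge is subdivided at least once) or the complement of one: in a proper subdivision of $K_t$ the $t$ branch vertices are pairwise non-adjacent, so for any $H$ on $t$ vertices an induced proper subdivision of $H$ is obtained by deleting the interiors of the paths indexed by the non-edges of $H$, and the same works after complementation. So fix a constant $c>0$, assume $G$ contains neither an induced proper subdivision of $K_t$ nor the complement of one, and suppose for contradiction that $G$ has no disjoint $A,B$ with $\abs A,\abs B\ge c\abs{V(G)}$ and $A$ complete or anticomplete to $B$. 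The goal is a contradiction for $c$ small enough in terms of $t$.

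Since the hypothesis and both desired alternatives are invariant under passing to the complement, I may split into a sparse and a dense regime and treat only one. Let $S$ be the subdivision of $K_t$ obtained by subdividing every edge exactly once, a fixed finite graph; as $G$ has no induced copy of $S$, a R\"odl-type argument produces a linear-sized vertex set $X$ on which $G$ or $\bar G$ has edge-density at most $\varepsilon$, and after complementing if necessary I assume $G[X]$ is $\varepsilon$-sparse. The no-pure-pair hypothesis then has two consequences on a cleaned, still linear subset of $X$. First, from the absence of a linear \emph{anticomplete} pair one extracts a connectivity property: any two linear subsets are linked by many short induced paths, so branch vertices chosen far apart can always be joined. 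Second, the sparsity together with the absence of a linear \emph{complete} pair controls chords, so that candidate paths can be kept induced and, more importantly, distinct paths can be kept anticomplete to one another except at shared endpoints.

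With these ingredients the construction proceeds: choose $t$ pairwise non-adjacent branch vertices that are spread out, and build the $\binom{t}{2}$ connecting induced paths one at a time, internally disjoint. This assembly is the step I expect to be the main obstacle, because the requirements interact: each new path must avoid the vertices \emph{and} the neighborhoods of all previously built paths, so as to stay internally disjoint and anticomplete to them, while remaining induced and still reaching its target, and there are $\binom{t}{2}$ such paths competing for a single linear pool of vertices. The role of the no-pure-pair hypothesis is exactly to forbid the two bulk ways this could fail: a large set of forced cross-edges would exhibit a complete pair, and a large set obstructing connectivity would exhibit an anticomplete pair. Converting this qualitative dichotomy into a quantitative extension lemma — one guaranteeing that enough ``free'' vertices survive after each of the $\binom{t}{2}$ stages, with every threshold linear in $\abs{V(G)}$ — is the crux, and is where the genuine work of the Chudnovsky--Scott--Seymour--Spirkl machinery lies. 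Once the sparse case yields an induced proper subdivision of $K_t$, contradicting our assumption, the dense case follows verbatim in $\bar G$ and yields the complement of one, completing the contradiction and hence establishing Theorem~\ref{thm:gen}.
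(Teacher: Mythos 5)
First, a contextual point: this paper does not prove Theorem~\ref{thm:gen} at all --- it is quoted as a black box from \cite{CSSS2018}, where it is derived from the sparse statement (Theorem~\ref{thm:sparse} here) by essentially the reduction you describe. Your opening moves are sound and do match that route: the reduction to proper subdivisions of $K_t$ is correct (the branch vertices of a proper subdivision are pairwise non-adjacent, so the interiors of the paths indexed by non-edges of $H$ can be deleted, and this commutes with complementation), and R\"odl's theorem legitimately converts ``no induced copy of the $1$-subdivision of $K_t$'' into a linear-sized set $X$ on which $G$ or $\overline{G}$ is $\varepsilon$-sparse. Note also that once you are in the $\varepsilon$-sparse regime, the ``no complete pair'' half of the hypothesis is essentially vacuous (a complete pair of linear size would force quadratically many edges), so what remains to be proved at that point is exactly Theorem~\ref{thm:sparse}.

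That is where the gap is, and it is the whole theorem. Your proposal stops at what you yourself call the crux: converting the no-pure-pair hypothesis into a quantitative extension lemma is, in your words, ``where the genuine work of the Chudnovsky--Scott--Seymour--Spirkl machinery lies'' --- i.e., the central step is not carried out but delegated to the very result being proved. Moreover, the greedy plan you sketch (fix $t$ spread-out branch vertices, then route the connecting induced paths one at a time, each anticomplete to the previous ones away from shared endpoints) fails as stated: the absence of a linear anticomplete pair yields edges, and hence short paths, only between \emph{linear-sized} sets, whereas at each stage you must route a path between two \emph{individual} branch vertices. After earlier stages delete the neighborhoods of the built paths from the pool, nothing guarantees that a given branch vertex retains any neighbor, let alone linearly many, in what survives; sparsity bounds only average degree, so single vertices and short paths can still dominate a linear fraction of the pool. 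Maintaining linear-sized sets of candidate continuations attached to each branch vertex through all stages is precisely the content of the machinery of \cite{CSSS2018}, and it occupies that entire paper. In summary, your write-up is a correct reduction of Theorem~\ref{thm:gen} to Theorem~\ref{thm:sparse} (i.e., to the cited result), not a proof of Theorem~\ref{thm:gen}; in the context of this paper the honest proof of this statement is the citation itself.
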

\begin{THM}[Chudnovsky et al.~\cite{CSSS2018}]\label{thm:sparse}
  For every graph $H$, there exists $\delta>0$ such that
  every $n$-vertex graph $G$ with $\abs{E(G)}\le \delta\abs{V(G)}^2$
  has an induced subgraph isomorphic
  to a subdivision of $H$ 
  or
  has an anticomplete pair of disjoint sets $A$, $B$ of of vertices such that
  $\abs{A},\abs{B}\ge\delta n$.
\end{THM}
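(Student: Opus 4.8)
The plan is to prove the contrapositive. Fix $H$ with $h=\abs{V(H)}$ vertices, choose $\delta>0$ small (polynomially small in $h$ and $\Delta(H)$), and show that if an $n$-vertex graph $G$ has at most $\delta n^2$ edges and has \emph{no} anticomplete pair of sets of size $\ge\delta n$, then $G$ contains an induced subdivision of $H$; the stated theorem then follows by taking the minimum of the constants produced for the two outcomes. I would first make three reductions. Since $\sum_v\deg(v)=2\abs{E(G)}\le 2\delta n^2$, fewer than $2\sqrt\delta\,n$ vertices have degree at least $\sqrt\delta\,n$; deleting them yields an induced subgraph of maximum degree below $\sqrt\delta\,n$ that still has at most $\delta n^2$ edges and, being induced, still has no anticomplete pair of size $\ge\delta n$. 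Next, the no-anticomplete-pair hypothesis forces a single giant component of linear size (two components of size $\ge\delta n$ would be anticomplete, and many small components could be grouped into two anticomplete sets), so I may assume $G$ is connected. Finally I would reformulate the hypothesis as expansion: every set of at least $\delta n$ vertices dominates all but fewer than $\delta n$ vertices. Applying this to the breadth-first layers $N_{\le i}(v)$, which are anticomplete to $N_{\ge i+2}(v)$, shows that $G$ is not ``spread out'' and lets me extract a linear-sized core $C$ of bounded diameter; this is exactly what excludes path-like graphs, which do have large anticomplete pairs.

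With these reductions in hand I would build the induced subdivision one edge of $H$ at a time. First choose $h$ branch vertices in $C$ that are pairwise non-adjacent; such an independent set exists because $C$ is sparse. Order $E(H)=\{e_1,\dots,e_m\}$ and maintain a partial induced subdivision realizing $e_1,\dots,e_j$. To realize the next edge $e_{j+1}=b_ib_k$, delete from $C$ the closed neighborhoods of every vertex used so far \emph{except} $b_i$ and $b_k$; since all these vertices have degree below $\sqrt\delta\,n$ and the partial structure is small, this removes at most $\delta n$ vertices and leaves a linear-sized set $R$ in which the no-anticomplete-pair property still holds. In $R\cup\{b_i,b_k\}$ the vertices $b_i$ and $b_k$ are joined (by the connectivity coming from no-anticomplete-pair), so I take a \emph{shortest} $b_i$--$b_k$ path: being shortest it is induced, and because I deleted the neighborhoods of all other used vertices it is internally non-adjacent to the rest of the structure and creates no chord at $b_i$ or $b_k$. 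Adding this path realizes $e_{j+1}$, and after $m$ steps I obtain an induced subdivision of $H$.

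The crux, and the step I expect to be the main obstacle, is keeping the two halves of the argument compatible as the construction proceeds. Inducedness forces me to work with low-degree vertices and to delete neighborhoods, which shrinks the ambient graph; path-existence requires that the no-anticomplete-pair expansion survive in what remains; and keeping the cumulative structure below $\delta n$ in size requires each connecting path to be \emph{short}, which is why the bounded-diameter core from the first paragraph is essential. Proving that a sparse graph with no linear anticomplete pair really does contain such a core, and then balancing $\delta$, the degree threshold $\sqrt\delta\,n$, and the diameter bound so that all the invariants hold simultaneously, is where the genuine work lies. I note finally that Theorem~\ref{thm:gen} does not shortcut this: it already gives an induced subdivision of $H$, an induced \emph{complement} of a subdivision of $H$, or a pure pair of size $cn$, and choosing $\delta<c^2$ makes a complete pair of size $cn$ impossible in a sparse graph (it would need more than $\delta n^2$ edges), so any pure pair must be anticomplete; but the ``complement of a subdivision'' alternative can occur even in very sparse graphs, since a minimal subdivision has only constantly many vertices and its dense complement fits easily, and it is precisely this alternative that the direct construction above is designed to avoid.
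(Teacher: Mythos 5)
First, a point of orientation: the paper does not prove this statement at all --- Theorem~\ref{thm:sparse} is quoted as a black box from \cite{CSSS2018}, where it is the main theorem of a long and technical paper. So you are attempting to re-prove that external result from scratch. Your preliminary reductions are fine (discarding the at most $2\sqrt{\delta}\,n$ vertices of degree at least $\sqrt{\delta}\,n$, passing to a giant component, reading the hypothesis as expansion of linear-sized sets), and your closing observation is correct and worth making: Theorem~\ref{thm:gen} cannot be used as a shortcut, because the complement-of-a-subdivision outcome can occur in sparse graphs. But the construction itself has genuine gaps, and they sit exactly where you concede ``the genuine work lies,'' so the proposal is a strategy outline rather than a proof --- and, worse, the strategy as described cannot be repaired.

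The decisive problem is that your key lemma, the existence of a linear-sized core $C$ of bounded diameter, is \emph{false}. Take a $d$-regular Ramanujan-type expander with $d$ a large constant, say $d\ge 16/\delta^2$, so that by the expander mixing lemma every two disjoint vertex sets of size $\delta n$ are joined by an edge; such a graph has $dn/2\le \delta n^2$ edges once $n$ is large, so it satisfies both hypotheses of the contrapositive. Yet in a graph of maximum degree $d$, any vertex set of induced diameter $D$ has at most $d^{D+1}$ vertices, so \emph{no} linear-sized set of bounded induced diameter exists. Consequently, in such graphs every induced subdivision of $H$ necessarily uses paths of length $\Omega(\log n)$, and your invariant that ``the partial structure is small'' collapses; the BFS-layer argument only shows that two consecutive layers carry most of the mass, which says nothing about distances inside the induced subgraph on those layers. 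Two further steps fail independently. The deletion bookkeeping is arithmetically wrong: removing the closed neighborhood of even a \emph{single} used vertex of degree just below $\sqrt{\delta}\,n$ already deletes more than $\delta n$ vertices, since $\sqrt{\delta}>\delta$; keeping deletions below a small linear fraction forces the structure to have $O(1/\sqrt{\delta})$-bounded size, which is incompatible with the unbounded path lengths above. And the path-finding step is unjustified at the vertex scale: the hypothesis forbids anticomplete pairs of two \emph{linear}-sized sets and says nothing about individual vertices, so after your deletions the branch vertex $b_i$ may have no neighbors left in $R$ at all (one internal vertex of an earlier path can dominate $N(b_i)$), and then no $b_i$--$b_k$ path exists; no expansion property at scale $\delta n$ can rescue two specific vertices. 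Handling graphs that are sparse, expanding at linear scales, and yet have no short-diameter core --- expanders being the extreme case --- is precisely the difficulty that the machinery of \cite{CSSS2018} (levelings and an induction on $H$) is built to overcome.
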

\begin{proof}[Proof of Theorem~\ref{thm:main}]
  Let $c$, $\delta$ be the constants given by Theorems~\ref{thm:gen} and \ref{thm:sparse}.
  We claim that $\varepsilon=\min(2c\delta,\delta)$.

  If $G$ has an induced subdivision of $H$,
  then we can apply local complementations to degree-$2$ vertices to obtain
  a vertex-minor isomorphic to $H$, contradicting our assumption.
  Thus $G$ has no induced subdivision of $H$.
  By the same reason, $G*v$ has no induced subdivision of $H$
  for every vertex $v$.

  If every vertex of $G$ has degree at most $2\delta n$,
  then
  $\abs{E(G)}\le \delta n^2$. By Theorem~\ref{thm:sparse}, $G$ has an anticomplete
  pair of disjoint sets $A$, $B$ with $\abs{A},\abs{B}\ge \delta n$.

  If a vertex $v$ has degree more than $2\delta n$,
  then let $G'$ be the subgraph of $G$ induced by all neighbors of $v$.
  Note that neither $G'$ nor the complement of $G'$ has an induced subdivision of $H$
  and therefore by Theorem~\ref{thm:gen}, $G'$ has an anticomplete or complete
  pair of sets $A$, $B$ with $\abs{A},\abs{B}\ge c\abs{V(G')}>2c\delta n$.
\end{proof}

\begin{REM}
There are two major examples of graph classes known to be closed
under taking vertex-minors; graphs of rank-width at most $k$~\cite{Oum2004}
and circle graphs~\cite{Bouchet1994}. It is easy to see that the class of graphs of rank-width at most $k$ has the strong Erd\H{o}s-Hajnal property. To see this, observe that an $n$-vertex graph $G$ of rank-width at most $k$ has a vertex set $X$ such that the cut-rank of $X$ is at most $k$ and $\abs{X},\abs{V(G)}-\abs{X}>n/3$.
Then one can partition each of  $X$ and $V(G)-X$ into at most $2^k$ subsets such that
each part of $X$ is complete or anticomplete to each part of $V(G)-X$. This proves that such a graph has an anticomplete or complete pair of sets $A$, $B$ such that $\abs{A}, \abs{B}>(n/3)/2^k$.
The class of circle graphs has the strong Erd\H{o}s-Hajnal property, implied
by a theorem of Pach and Solymosi~\cite{PS2001}.
\end{REM}

\subsection*{Acknowledgement}
This paper was written during the 2018 Barbados Graph Theory workshop at Bellairs Research Institute, McGill University. 
The authors would like to thank Vaidyanathan Sivaraman, who
suggested this problem during the workshop.

%\bibliographystyle{abbrv}
%\bibliography{mybib}
\end{document}